\newtheorem{theorem}{Theorem}[section]
\newtheorem{prop}[theorem]{Proposition}
\newtheorem{cor}[theorem]{Corollary}
\newtheorem{lem}[theorem]{Lemma}
\theoremstyle{definition}
\newcommand{\C}{\ensuremath{\mathbb{C}}}
\newcommand{\Z}{\ensuremath{\mathbb{Z}}}
\newcommand{\N}{\ensuremath{\mathbb{N}}}
\begin{document}


%
%
\renewcommand{\subjclassname}{
\textup{2020} Mathematics Subject Classification}\subjclass[2020]{Primary 
57K12, 
20F34. 
Secondary 
57K45, 
20F12. 
}

\date{\today}
\keywords{Knot quandle; fibered knot; free group}


\title[The knot quandles of Suciu's ribbon $n$-knots]{
The knot quandles of 
Suciu's ribbon $n$-knots and 
automorphisms on 
the free group of rank two
}


\author{Takuya Sakasai}
\address{Graduate School of Mathematical Sciences, 
The University of Tokyo, 
3-8-1 Komaba, 
Meguro-ku, Tokyo, 153-8914, Japan}
\email{sakasai@ms.u-tokyo.ac.jp}

\author{Kokoro Tanaka}
\address{Department of Mathematics, 
Tokyo Gakugei University, 
4-1-1 Nukuikita-machi, 
Koganei-shi, Tokyo 184-8501, Japan}
\email{kotanaka@u-gakugei.ac.jp}



\begin{abstract}
Jab{\l}onowski proved that the knot quandles of Suciu's $n$-knots, which share isomorphic knot groups, are mutually non-isomorphic, and Yasuda later gave a different proof. In this paper, we present yet another proof of this result by analyzing the conjugacy classes of certain automorphisms of the free group of rank two.


\end{abstract}

\maketitle



\section{Introduction}
\label{sec:intro}

%
Suciu \cite{Suc85} constructed a family of fibered ribbon $n$-knots in $S^{n+2}$ for $n>1$, whose knot groups are all isomorphic to that of the trefoil. 
He distinguished these knots using the second homotopy groups of their complements. 
Later, Kanenobu and Sumi \cite{KaSu20} computed the fundamental groups of their double branched covers and showed that these groups are mutually non-isomorphic via $\operatorname{SL}(2,\C)$-representations, thereby distinguishing the knots.  
A \emph{quandle} \cite{Joy82,Mat82} is an abstract algebraic structure whose axioms correspond to the Reidemeister moves on $1$-knot diagrams. 
In knot theory, the most important example is the \emph{knot quandle}, which is associated with an oriented $n$-knot and serves as a knot invariant. 
It is closely related to the knot group, in that information about the knot group can be extracted from the knot quandle. 
The converse will be mentioned in the end of this paper.

Jab{\l}onowski \cite{Jab25-pre} proved that the knot quandles of Suciu's $n$-knots are mutually non-isomorphic by comparing their \emph{core group}s, and Yasuda \cite{Yas25-pre} later provided an alternative proof. More specifically, his alternative proof can be described as follows. Winker \cite{Win84} established for $n=1$ that the knot quandle of an oriented $n$-knot carries information about the fundamental group of its double branched cover. Recently, Yasuda \cite{Yas25-pre} extended this result to all $n \ge 1$. As a consequence, combining his result with the work of Kanenobu and Sumi \cite{KaSu20}, he reproved that the knot quandles of Suciu's $n$-knots are mutually non-isomorphic, and hence distinguish these knots. 
%
%
In this paper, we provide yet another proof of this result by analyzing conjugacy classes of certain automorphisms of the free group of rank two.  

Aside from non-isomorphism, a fundamental invariant of quandles is their \emph{type}, which measures their complexity and takes values in $\N \cup \{\infty\}$.
%
Yasuda \cite{Yas25-pre} also showed that the types of these quandles are infinite by analyzing linear representations of the $3$-strand braid group, which is isomorphic to the knot group of the trefoil.
We give another proof of this infiniteness by computing the orders of the above automorphisms of the free group of rank two. 
The authors believe our alternative proof offers an additional perspective on the diversity of Suciu's knots.

\section{Suciu's $n$-knots and the free group of rank two}
\label{sec:}

Suciu \cite{Suc85} constructed a family of fibered ribbon $n$-knots $R_k$ in $S^{n+2}$ for $n>1$, indexed by positive integers $k$.
The fiber of $R_k$ is a punctured $S^1 \times S^n \mathbin{\#} S^1 \times S^n$, whose fundamental group is isomorphic to the free group $F$ of rank two, generated by $a$ and $b$.
For each positive integer $k$, let $f_k$ be an element of the automorphism group $\operatorname{Aut}(F)$ of $F$ defined by $f_k(a) = a^k b a^{-k}$ and $f_k(b) = a^{k-1} b a^{-k}$.
We fix an isomorphism that sends the homotopy class of the loop $S^1 \times \{*\}$ in the left $S^1 \times S^n$ to $a$ and that in the right $S^1 \times S^n$ to $b$.
With respect to this identification, the automorphism induced by the monodromy coincides with $f_k$; see \cite[p.488]{Suc85}.


\begin{prop}
\label{prop:order}
The order of $f_k$ in $\operatorname{Aut}(F)$ is infinite for any positive integer $k$. 
\end{prop}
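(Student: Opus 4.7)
The plan is to reduce, via abelianization, to showing $f_k^6 \neq \mathrm{id}$, and then to compute $f_k^3$ explicitly enough that its square is visibly a nontrivial inner automorphism.

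First I would factor $f_k = \iota_{a^k} \circ g$, where $\iota_w(x) := w x w^{-1}$ denotes conjugation and $g \in \operatorname{Aut}(F)$ is the automorphism defined by $g(a) = b$, $g(b) = a^{-1} b$. Abelianising, both $g$ and $f_k$ induce the matrix $M = \bigl(\begin{smallmatrix} 0 & -1 \\ 1 & 1 \end{smallmatrix}\bigr) \in GL_2(\Z)$, which satisfies $M^3 = -I$ and thus has order $6$. Under the classical identification $\operatorname{Out}(F) \cong GL_2(\Z)$ (Nielsen), the outer class of $f_k$ has order $6$; since $\operatorname{Inn}(F) \cong F$ is torsion-free, any finite-order element of $\operatorname{Aut}(F)$ whose outer class is trivial must itself be trivial. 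Therefore, if $f_k$ had finite order, we would have $f_k^6 = \mathrm{id}$, and the proposition reduces to disproving this identity.

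Next, using the relation $g \circ \iota_w = \iota_{g(w)} \circ g$, I would obtain
\[
f_k^3 = \iota_{a^k \cdot g(a^k) \cdot g^2(a^k)} \circ g^3 = \iota_{a^k b^k (a^{-1} b)^k} \circ g^3.
\]
Iterating $g$ directly and cyclically reducing gives $g^3 = \iota_{b^{-1} a} \circ \iota$, where $\iota \in \operatorname{Aut}(F)$ is the involution $a \mapsto a^{-1}$, $b \mapsto b^{-1}$. Merging the two inner conjugators and simplifying yields $f_k^3 = \iota_{w_k} \circ \iota$ with
\[
w_k = a^k b^k (a^{-1} b)^{k-1}.
\]

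Finally, since $\iota^2 = \mathrm{id}$ and $\iota \circ \iota_v = \iota_{\iota(v)} \circ \iota$, squaring yields $f_k^6 = \iota_{w_k \iota(w_k)}$, where
\[
w_k \iota(w_k) = a^k b^k (a^{-1} b)^{k-1} \cdot a^{-k} b^{-k} (a b^{-1})^{k-1}.
\]
A letter-by-letter inspection of the junctions between the blocks of identical letters shows this word is freely reduced and of length $8k - 4 \geq 4$ for every $k \geq 1$. Since $F$ has trivial center, $\iota_{w_k \iota(w_k)}$ is a nontrivial inner automorphism, so $f_k^6 \neq \mathrm{id}$, contradicting finite order. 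The main hurdle is pinning down $w_k$: one must iterate $g$ three times, recognise that $g^3$ realises the outer class $-I$ via the specific inner twist $\iota_{b^{-1} a}$, and then carefully collapse the conjugator arising from $(\iota_{a^k} \circ g)^3$ down to the clean expression $a^k b^k (a^{-1} b)^{k-1}$.
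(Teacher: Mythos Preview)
Your argument is correct. The factorisation $f_k=\iota_{a^k}\circ g$, the identity $g^3=\iota_{b^{-1}a}\circ\iota$, the resulting $w_k=a^k b^k(a^{-1}b)^{k-1}$, and the verification that $w_k\iota(w_k)$ is reduced of length $8k-4$ all check out; and your reduction ``finite order $\Rightarrow f_k^6=\mathrm{id}$'' via torsion-freeness of $\operatorname{Inn}(F)\cong F$ is sound.

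The paper's proof of this proposition is quite different and shorter: after observing that the image of $f_k$ in $GL_2(\Z)$ has order $6$, it simply invokes Meskin's classification of torsion in $\operatorname{Aut}(F_2)$, which says any nontrivial finite-order automorphism has order $2$, $3$, or $4$, an immediate contradiction. Your approach trades that citation for an explicit computation of $f_k^6$ as the inner automorphism by $x_k:=w_k\iota(w_k)=a^k b^k(a^{-1}b)^{k-1}a^{-k}b^{-k}(ab^{-1})^{k-1}$. This is exactly the content of the paper's \emph{next} lemma, which it proves for a different purpose (the non-conjugacy Proposition~\ref{prop:non-conj}) by iterating $g$ six times rather than recognising $g^3$ as inversion up to conjugacy. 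So your route is more self-contained for this proposition alone, and your use of the involution $\iota$ to halve the computation is a nice shortcut; the paper's route is quicker here but relies on an external structural result, and defers the explicit formula for $f_k^6$ to where it is actually needed.
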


\begin{proof}
The abelianization map from $F$ to $\mathbb{Z}^2$ induces a natural homomorphism from $\operatorname{Aut}(F)$ to $\operatorname{GL}(2,\mathbb{Z})$, and the image of $f_k$ is $\begin{pmatrix}0 & -1 \\ 1 & 1\end{pmatrix}$ for all $k$. A simple calculation shows that this matrix has order $6$. Hence, if $f_k$ has finite order, it must be a multiple of $6$. However, any nontrivial periodic automorphism from $\operatorname{Aut}(F)$ has order $2$, $3$, or $4$; see \cite{Mes74}. Therefore, the order of $f_k$ is infinite. 
\end{proof}

From the above argument, the map $f_k^6$ lies in the kernel of the natural homomorphism from $\operatorname{Aut}(F)$ to $\operatorname{GL}(2,\mathbb{Z})$. Since $\operatorname{GL}(2,\mathbb{Z})$ is isomorphic to the outer automorphism group $\operatorname{Out}(F)$ of the rank-two free group $F$, it follows that $f_k^6$ belongs to the inner automorphism group $\operatorname{Inn}(F)$ of $F$.  
Since the center of $F$ is trivial, 
the natural homomorphism 
$I \colon F \to \operatorname{Inn}(F)$
is an isomorphism. See \cite[Chapter I]{LS-book}, for example. 
%
At this point, it is natural to attempt a direct computation of $f_k^6$, which yields the following.

\begin{lem}
The map $f_k^6$ is the inner automorphism $I(x_k)$ of $F$ with 
\[
x_k:=a^k b^k (a^{-1} b)^{k-1} a^{-k} b^{-k} (a b^{-1})^{k-1},
\]
which lies in the commutator subgroup $[F,F]$ of $F$. 
\end{lem}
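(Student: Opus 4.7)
The strategy is direct verification. Since $f_k^6 \in \operatorname{Inn}(F)$ by the remark following Proposition~\ref{prop:order}, and the natural map $I\colon F \to \operatorname{Inn}(F)$ is an isomorphism (as the center of $F$ is trivial), there is a unique $x_k \in F$ with $f_k^6 = I(x_k)$. To match this $x_k$ with the asserted word, it suffices to verify on the generators that $x_k a x_k^{-1} = f_k^6(a)$ and $x_k b x_k^{-1} = f_k^6(b)$.

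To halve the depth of the recursion, I would exploit that the image of $f_k^3$ in $\operatorname{GL}(2,\mathbb{Z})$ is $-I$, coinciding with the image of the automorphism $\phi \in \operatorname{Aut}(F)$ defined by $\phi(a) = a^{-1}$ and $\phi(b) = b^{-1}$. Hence $f_k^3 \circ \phi^{-1} \in \operatorname{Inn}(F)$, so $f_k^3 = I(z_k) \circ \phi$ for a unique $z_k \in F$. The identity $\phi \circ I(z) = I(\phi(z)) \circ \phi$ together with $\phi^2 = \operatorname{id}$ then gives
\[
f_k^6 = I(z_k) \circ \phi \circ I(z_k) \circ \phi = I\bigl(z_k \cdot \phi(z_k)\bigr).
\]
A direct inspection shows that the choice $z_k := a^k b^k (a^{-1} b)^{k-1}$ yields
\[
z_k \cdot \phi(z_k) = a^k b^k (a^{-1}b)^{k-1} \cdot a^{-k} b^{-k} (ab^{-1})^{k-1} = x_k,
\]
so the lemma reduces to checking the single identity $f_k^3 = I(z_k) \circ \phi$ for this specific $z_k$.

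That identity is checked on generators, namely $f_k^3(a) = z_k a^{-1} z_k^{-1}$ and $f_k^3(b) = z_k b^{-1} z_k^{-1}$, by iteratively computing $f_k^2$ and then $f_k^3$. The computation is driven by the telescoping identities
\[
(a^k b a^{-k})^j = a^k b^j a^{-k}, \qquad (a^{k-1} b a^{-k})^j = a^{k-1} (ba^{-1})^j a^{-(k-1)},
\]
valid for every integer $j$, which collapse powers of $f_k(a)$ and $f_k(b)$ and trigger the many $a^{\pm k}$-cancellations needed to reach the closed form.

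Finally, $x_k \in [F,F]$ is immediate from an abelianization count: the total exponents of $a$ and $b$ in the given formula for $x_k$ are $k - (k-1) - k + (k-1) = 0$ and $k + (k-1) - k - (k-1) = 0$, respectively, so $x_k$ lies in the kernel of $F \to \mathbb{Z}^2$. The principal obstacle is the combinatorial bookkeeping in computing $f_k^3$, where intermediate words grow in length and one must track cancellations meticulously to recover the compact form of $z_k$, and thus of $x_k$.
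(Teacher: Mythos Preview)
Your proof is correct and the verification that $f_k^3 = I(z_k)\circ\phi$ with $z_k = a^k b^k (a^{-1}b)^{k-1}$ does go through (the telescoping identities you cite are exactly what collapses $f_k^3(a)$ and $f_k^3(b)$ to $z_k a^{-1} z_k^{-1}$ and $z_k b^{-1} z_k^{-1}$). Your route, however, differs from the paper's.

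The paper factors $f_k = I(a^k)\circ f$ where $f\in\operatorname{Aut}(F)$ is the \emph{$k$-independent} map $f(a)=b$, $f(b)=a^{-1}b$, and then uses the induction $f_k^{\ell+1} = I\bigl(a^k f(a)^k\cdots f^\ell(a)^k\bigr)\circ f^{\ell+1}$ together with a one-time computation of $f^i(a)$ for $i\le 7$ showing $f^6 = I(b^{-1}aba^{-1})$. Your decomposition instead targets $f_k^3$ via the involution $\phi\colon a\mapsto a^{-1}$, $b\mapsto b^{-1}$, giving $f_k^6 = I\bigl(z_k\,\phi(z_k)\bigr)$. The paper's choice keeps the iterated words short by deferring all $k$-dependence to a final product of $k$-th powers; your choice halves the recursion depth and, more notably, gives a conceptual explanation for the visible symmetry of $x_k$ (its second half is literally $\phi$ applied to its first half), at the price of carrying the parameter $k$ through the computation of $f_k^2$ and $f_k^3$. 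Both the paper and you handle the $[F,F]$ claim by the same total-exponent count.
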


\begin{proof}
Let $f$ be the automorphism of $F$ defined by $f(a) = b$ and $f(b) = a^{-1}b$. 
Then for each positive integer $k$, we have $f_k = I(a^k) \circ f$. 
%
By induction, for any positive integer $\ell$, 
we also have $f_k^{\ell+1} = I(a^k f(a)^k \cdots f^{\ell}(a)^k) \circ f^{\ell+1}$. 
%
Moreover, by computing up to $f^6(a)$ and $f^6(b)=f^7(a)$, one sees that
$f^6 = I(b^{-1}ab a^{-1})$, 
as can be verified from the following calculations:
\[
\begin{aligned}
& f(a)=b, \quad f^2(a)=a^{-1}b, \quad f^3(a)=b^{-1}a^{-1}b, \\[1mm]
& f^4(a)=b^{-1}ab^{-1}a^{-1}b=(b^{-1}a)b^{-1}(b^{-1}a)^{-1}, \\[1mm]
& f^5(a)=b^{-1}a^2 b^{-1} a^{-1}b=(b^{-1}ab)b^{-1}a(b^{-1}ab)^{-1}, \\[1mm]
& f^6(a)=b^{-1}ab\,a b^{-1}a^{-1}b = (b^{-1}ab a^{-1})\,a\,(b^{-1}ab a^{-1})^{-1}, \\[1mm]
& f^7(a)=b^{-1}ab a^{-1} b a b^{-1} a^{-1} b=(b^{-1}ab a^{-1})\,b\,(b^{-1}ab a^{-1})^{-1}.
\end{aligned}
\]
Hence, we have $f_k^6 = I\bigl(a^k f(a)^k \cdots f^5(a)^k \,(b^{-1}ab a^{-1})\bigr)$.  
A straightforward computation shows that
\[
a^k f(a)^k \cdots f^5(a)^k \,(b^{-1}ab a^{-1}) 
= a^k b^k (a^{-1}b)^{\,k-1} a^{-k} b^{-k} (ab^{-1})^{\,k-1}. 
\] 
Since the total exponents of both $a$ and $b$ in this element are zero, it lies in the commutator subgroup $[F,F]$.
\end{proof}


\begin{prop}
\label{prop:non-conj}
The automorphisms $f_k$ are mutually non-conjugate in $\operatorname{Aut}(F)$. 
\end{prop}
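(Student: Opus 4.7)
The plan is to reduce the non-conjugacy of the $f_k$'s to a question about the single elements $x_k \in F$ produced by the preceding lemma. If $\phi \in \operatorname{Aut}(F)$ satisfies $\phi \circ f_k \circ \phi^{-1} = f_l$, then raising to the sixth power and invoking the lemma gives $\phi \circ I(x_k) \circ \phi^{-1} = I(x_l)$. Since $\phi \circ I(y) \circ \phi^{-1} = I(\phi(y))$ for any $y \in F$, this reads $I(\phi(x_k)) = I(x_l)$, and by injectivity of $I$ I conclude that $\phi(x_k) = x_l$ in $F$. Thus it suffices to show that $x_k$ and $x_l$ lie in different $\operatorname{Aut}(F)$-orbits whenever $k \neq l$.

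To distinguish those orbits, I plan to project $F$ onto its free $2$-step nilpotent quotient $N := F/[F,[F,F]]$. Every element of $N$ has a unique normal form $a^m b^n c^p$ with $c := [a,b]$, and $\langle c \rangle$ is both the center of $N$ and isomorphic to $\Z$. Any automorphism of $N$ preserves the center and acts on $c$ via the determinant of its induced action on $N^{\mathrm{ab}} = \Z^2$, so in particular by $\pm 1$. Hence, writing the image of $x_k$ in $N$ as $c^{r_k}$, the integer $|r_k|$ is an invariant of the $\operatorname{Aut}(F)$-orbit of $x_k$ in $F$.

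The main computational step is then to determine $r_k$ from the explicit expression for $x_k$. Using the multiplication rule $(m,n,p)(m',n',p') = (m+m',\,n+n',\,p+p'-nm')$ on $N \cong \Z^3$, one first verifies that $(a^{-1}b)^n$ and $(ab^{-1})^n$ have normal forms $a^{-n}b^n c^{\binom{n}{2}}$ and $a^n b^{-n} c^{\binom{n}{2}}$, respectively. Multiplying the six blocks of $x_k$ in order, the $a$- and $b$-exponents collapse to zero as required, and a routine accumulation of the $c$-exponent yields $r_k = 3k^2 - 3k + 1$, which is consistent with the sanity check $r_1 = 1$ coming from $x_1 = [a,b]$. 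Since $3k^2 - 3k + 1$ is strictly increasing and positive on positive integers, the condition $|r_k| = |r_l|$ forces $k = l$, completing the proof. The only real obstacle is keeping the $N$-level bookkeeping accurate through the six-block multiplication; there is no conceptual leap involved.
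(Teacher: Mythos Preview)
Your proof is correct and follows essentially the same route as the paper: reduce to the $\operatorname{Aut}(F)$-orbit of $x_k$, pass to the $2$-step nilpotent quotient where $\operatorname{Aut}(F)$ acts on the center by $\pm 1$, and compute the exponent of $[a,b]$ to be $3k^2-3k+1$. The only cosmetic difference is that the paper computes this exponent by rewriting $x_k$ as a product of commutators and using the identification $[F,F]/[[F,F],F]\cong\wedge^2\Z^2$, whereas you carry out the same computation via the Heisenberg-type normal form in $N$.
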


\begin{proof}
It suffices to show that the elements $f_k^6$ in $\operatorname{Inn}(F)$ are mutually non-conjugate in $\operatorname{Aut}(F)$. 
Since $f \circ I(x) \circ f^{-1} = I(f(x))$ 
for any $x \in F$ and $f \in \operatorname{Aut}(F)$, 
the conjugacy class of $f_k^6 = I(x_k)$ in $\operatorname{Aut}(F)$ corresponds to the $\operatorname{Aut}(F)$-orbit of $x_k$ in $F$. 
Hence, it is enough to show that the $\operatorname{Aut}(F)$-orbits of the elements $x_k$ in $[F,F]$ are distinct. 
%
We then focus on the $\operatorname{Aut}(F)$-equivariant map from $[F,F]$ to $[F,F]/[[F,F],F]$ and show that the $\operatorname{Aut}(F)$-orbits of the images of $x_k$ are distinct. 
Note that the codomain is the center of the $2$-step nilpotent quotient of $F$, whose structure is well understood.
For instance, it is isomorphic to $\wedge^2 \Z^2 \cong \Z$; see \cite{MKS-book}, 
where we use that the abelianization of $F$ is $\Z^2$, 
with the equivalence class of the commutator $[a,b]$ as a generator.
Moreover, the action of $\operatorname{Aut}(F)$ on the codomain factors through $\mathrm{Aut}(F) \twoheadrightarrow \mathrm{GL}(2,\mathbb{Z}) \xrightarrow{\text{det}} \{\pm 1\}$, i.e., it is given by the sign of the determinant.

First, we rewrite $x_k$ in $[F,F]$ modulo $[[F,F],F]$ as a product of commutators:
\[
\begin{aligned}
x_k
&= 
a^k b^k (a^{-1} b)^{k-1} a^{-k} b^{-k} (a b^{-1})^{k-1} \\
&= 
a^k b^k (a^{-1} b)^{k-1} \cdot [a^{-k}, b^{-k}] b^{-k} a^{-k} \cdot ([a, b^{-1}] b^{-1} a)^{k-1} \\
&\equiv 
a^k b^k (a^{-1} b)^{k-1} b^{-k} a^{-k} (b^{-1} a)^{k-1} \cdot [a^{-k}, b^{-k}] [a, b^{-1}]^{k-1} \\
&= 
[a^k b^k, (a^{-1} b)^{k-1}] [a^{-k}, b^{-k}] [a, b^{-1}]^{k-1}
\end{aligned}
\]

Next, we express each commutator in terms of the generator $[a,b]$.
To do so, we use the isomorphism $[F,F]/[[F,F],F] \cong \wedge^2 \Z^2 \cong \Z$.
Namely, we apply the abelianization map from $F$ into $\Z^2$ to convert products into sums,
and replace each commutator with the exterior product. The computations then yield:
\[
[a^k b^k, (a^{-1} b)^{k-1}] 
\;\mapsto\;
(ka+kb) \wedge (-(k-1)a+(k-1)b) 
\;=\; 2k(k-1)\, a \wedge b
\]
\[
[a^{-k}, b^{-k}] 
\;\mapsto\;
(-ka) \wedge (-kb) 
\;=\; k^2 \, a \wedge b
\]
\[
[a, b^{-1}]^{k-1} 
\;\mapsto\;
(k-1) (a \wedge (-b) )
\;=\; -(k-1)\, a \wedge b
\]
In summary, we obtain
$x_k \equiv [a,b]^{2k(k-1)} \cdot [a,b]^{k^2} \cdot [a,b]^{-(k-1)} = [a,b]^{3k^2-3k+1}$.
Since $3k^2-3k+1 > 0$ is strictly increasing for $k \ge 1$,
these are distinct integers up to sign.
Hence, we see that the $\operatorname{Aut}(F)$-orbits of the images of $x_k$ are distinct.
\end{proof}

\section{The knot quandles of Suciu's $n$-knots}
\label{sec:}
%
A typical example of quandles is the \emph{generalized Alexander quandle} $\operatorname{GAlex}(G,\varphi)$, which is associated with a group $G$ and an automorphism $\varphi$ in $\operatorname{Aut}(G)$.  
%
%
Inoue~\cite{Ino19} proved that if $K$ is a fibered oriented $n$-knot for $n>1$, then its knot quandle $Q(K)$ 
is isomorphic to a generalized Alexander quandle $\operatorname{GAlex}(G,\varphi)$, 
where $G$ is the fundamental group of the fiber of $K$, 
and $\varphi$ is the automorphism of $G$ induced by the monodromy of the complement of $K$.  
From this, it follows in particular that the knot quandle $Q(R_k)$ of Suciu's fibered ribbon $n$-knot $R_k$ is isomorphic to $\operatorname{GAlex}(F,f_k)$.
With these preliminaries in place, we provide alternative proofs of 
\cite[Theorem~5.1]{Jab25-pre} and \cite[Theorem~1.3]{Yas25-pre}, in that order. 
We remark that \cite[Theorem~5.1]{Jab25-pre} and \cite[Theorem~1.2]{Yas25-pre} state the same result, as mentioned in the introduction.

\begin{theorem}
\label{thm:non-iso}
The knot quandles $Q(R_k)$ are mutually non-isomorphic. 
\end{theorem}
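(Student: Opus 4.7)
The plan is to reduce the claimed non-isomorphism of $Q(R_k)$ to the non-conjugacy of the monodromies $f_k$ established in Proposition \ref{prop:non-conj}. By Inoue's theorem recalled above, $Q(R_k) \cong \operatorname{GAlex}(F, f_k)$ as quandles, so it suffices to show $\operatorname{GAlex}(F, f_k) \not\cong \operatorname{GAlex}(F, f_\ell)$ whenever $k \neq \ell$.

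The crux is the isomorphism criterion that $\operatorname{GAlex}(F, \varphi_1) \cong \operatorname{GAlex}(F, \varphi_2)$ as quandles if and only if there exists $\psi \in \operatorname{Aut}(F)$ with $\psi \varphi_1 \psi^{-1} = \varphi_2$. The ``if'' direction is a direct verification: such a $\psi$ is itself a quandle isomorphism. For the ``only if'' direction, given a quandle isomorphism $\Phi$, set $\Psi(x) := \Phi(x) \cdot \Phi(e)^{-1}$. Since right multiplication by $\Phi(e)^{-1}$ is a quandle automorphism of $\operatorname{GAlex}(F, \varphi_2)$, $\Psi$ is also a quandle isomorphism, and $\Psi(e) = e$. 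Specializing $y = e$ in the defining relation $\Psi(\varphi_1(xy^{-1}) y) = \varphi_2(\Psi(x) \Psi(y)^{-1}) \Psi(y)$ yields the intertwining $\Psi \circ \varphi_1 = \varphi_2 \circ \Psi$.

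The principal obstacle is to promote this basepoint-preserving bijection $\Psi$ to a group homomorphism. The strategy is to combine the intertwining relation with the quandle identity for varied $x, y$, exploiting that $F$ is centerless and that the abelianized actions of the $f_k$ have no nontrivial fixed vectors; this should pin down $\Psi$ on a generating set of $F$ and force it to respect products. Once $\Psi$ is a group isomorphism, the relation $\Psi f_k \Psi^{-1} = f_\ell$ combined with Proposition \ref{prop:non-conj} forces $k = \ell$, completing the proof.
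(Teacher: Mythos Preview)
Your reduction to Proposition~\ref{prop:non-conj} via Inoue's identification $Q(R_k)\cong\operatorname{GAlex}(F,f_k)$ is exactly the paper's strategy, and the normalization $\Psi(x)=\Phi(x)\Phi(e)^{-1}$ together with the intertwining $\Psi\circ f_k=f_\ell\circ\Psi$ are correct first moves. The gap is precisely where you flag it: you have not shown that the basepoint-preserving quandle isomorphism $\Psi$ is a \emph{group} homomorphism, and the sentence about ``centerless'' and ``no nontrivial fixed vectors on the abelianization'' is a wish, not an argument. The quandle identity $\Psi(\varphi_1(xy^{-1})y)=\varphi_2(\Psi(x)\Psi(y)^{-1})\Psi(y)$ does not by itself force multiplicativity of $\Psi$; one needs an extra structural hypothesis, and the relevant one here is \emph{connectedness} of the quandles, which you never invoke.

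The paper closes this gap by citation rather than by a from-scratch argument: it quotes the known fact (\cite{CDS17}, \cite{HKKK24-pre}) that for \emph{connected} generalized Alexander quandles, $\operatorname{GAlex}(G,\psi)\cong\operatorname{GAlex}(G,\varphi)$ holds if and only if $\psi$ and $\varphi$ are conjugate in $\operatorname{Aut}(G)$, and then notes that knot quandles are always connected. If you want to keep your self-contained approach, you must either (i) prove connectedness of $\operatorname{GAlex}(F,f_k)$ and then reproduce the argument of the cited references showing that connectedness forces $\Psi$ to be a group map, or (ii) give a genuine direct argument that your stated hypotheses suffice---but as written, neither is done.
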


\begin{proof}
When $\operatorname{GAlex}(G,\psi)$ and $\operatorname{GAlex}(G,\varphi)$ are \emph{connected} (in the standard terminology of quandle theory), they are isomorphic if and only if $\psi$ and $\varphi$ are conjugate in $\operatorname{Aut}(G)$; see \cite[Lemma~B.1]{CDS17} and \cite{HKKK24-pre}. 
Since knot quandle $Q(R_k)$ is connected \cite[Lemma~2.27]{Nos17-book} and is isomorphic to $\operatorname{GAlex}(F,f_k)$, Proposition~\ref{prop:non-conj} immediately implies the statement of the theorem. 
\end{proof}

\begin{theorem}
\label{thm:type}
The type of $Q(R_k)$ 
is infinite for all positive integers $k$. 
\end{theorem}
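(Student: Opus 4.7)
The plan is to reduce the assertion about the type of $Q(R_k)$ to the order of the automorphism $f_k$, which has already been shown to be infinite in Proposition~\ref{prop:order}. Since $Q(R_k) \cong \operatorname{GAlex}(F, f_k)$ by Inoue's theorem recalled above, the task becomes a purely algebraic statement about the type of a generalized Alexander quandle $\operatorname{GAlex}(G, \varphi)$ in terms of the automorphism $\varphi$.

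The key step will be to establish, by a straightforward induction on $n$, the formula
\[
x *^n y \;=\; \varphi^n(x y^{-1})\, y
\]
for the iterated right translation in $\operatorname{GAlex}(G, \varphi)$, whose defining operation is $x * y = \varphi(x y^{-1})\, y$. Granted this formula, the identity $x *^n y = x$ holds for all $x, y \in G$ if and only if $\varphi^n(z) = z$ for every $z \in G$ (just set $z := xy^{-1}$, which ranges over all of $G$), i.e., $\varphi^n$ is the identity of $G$. Consequently, the type of $\operatorname{GAlex}(G, \varphi)$ coincides with the order of $\varphi$ in $\operatorname{Aut}(G)$, with the convention that infinite order corresponds to infinite type.

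Applying this general fact to $Q(R_k) \cong \operatorname{GAlex}(F, f_k)$ and invoking Proposition~\ref{prop:order} then immediately yields the claim. No genuine obstacle arises in this argument: the substantive work has already been done in Proposition~\ref{prop:order}, and the remaining iteration formula is a one-line induction of the form
\[
(x *^n y) * y \;=\; \varphi\bigl(\varphi^n(xy^{-1})\, y \cdot y^{-1}\bigr)\, y \;=\; \varphi^{n+1}(xy^{-1})\, y.
\]
If anything might deserve a remark, it is only the non-abelian nature of $F$, but inspection of the computation above shows that commutativity is never used.
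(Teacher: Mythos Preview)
Your proof is correct and follows essentially the same route as the paper: reduce to $\operatorname{GAlex}(F,f_k)$ via Inoue's theorem, identify the type of a generalized Alexander quandle with the order of its defining automorphism, and then invoke Proposition~\ref{prop:order}. The only difference is that the paper cites the identification of type with order from \cite[Proposition~2.1]{TaTa23-pre}, whereas you supply a direct proof via the iteration formula $x *^n y = \varphi^n(xy^{-1})y$; your inline argument is valid and makes the proof self-contained.
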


\begin{proof}
Since $Q(R_k)$ is isomorphic to $\operatorname{GAlex}(F,f_k)$, it is enough to determine the type of the latter. It has been shown in \cite[Proposition~2.1]{TaTa23-pre} that the type of a generalized Alexander quandle coincides with the order of its defining automorphism. Therefore, Proposition~\ref{prop:order} implies that the type of $Q(R_k)$ is infinite. 
\end{proof}


We have mentioned that the knot quandle carries information about the knot group.
As also noted in \cite{Yas25-pre}, we make here a remark concerning the converse. 
The second author, together with Taniguchi \cite{TaTa23-pre}, exhibited a triple of oriented $2$-knots whose knot groups are isomorphic, but whose knot quandles are mutually non-isomorphic. 
In that example, the types of the knot quandles differ, and this difference distinguishes the knot quandles. 
Although it is not explicitly stated in their paper, we note that by applying Artin's spinning construction \cite{Art25}, one can construct such triples of oriented $n$-knots for any $n>1$. 
This naturally raises the question of whether examples exist in which the types also coincide. 
In fact, combining the two theorems shows that Suciu's ribbon $n$-knots provide such examples, as stated in the following corollary: 

\begin{cor}
There exists an infinite family of oriented $n$-knots for $n>1$ whose knot groups are mutually isomorphic, whose knot quandles have infinite type, but whose knot quandles are mutually non-isomorphic.
\end{cor}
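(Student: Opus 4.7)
The plan is to package the two main theorems of the paper together with Suciu's original construction as a single infinite family. First, I would take the family in question to be Suciu's ribbon $n$-knots $\{R_k\}_{k\ge 1}$ themselves. By Suciu's work recalled in Section~\ref{sec:intro} and at the start of Section~2, every $R_k$ has knot group isomorphic to that of the trefoil, so the requirement that the knot groups be mutually isomorphic is automatic from the setup.

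Second, I would apply Theorem~\ref{thm:non-iso} to conclude that the knot quandles $Q(R_k)$ are pairwise non-isomorphic, and then apply Theorem~\ref{thm:type} to conclude that each $Q(R_k)$ has infinite type. In particular, all members of the family share the same (infinite) value of this quandle invariant, so type alone cannot distinguish them; nonetheless they are non-isomorphic as quandles by the preceding step.

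Combining these three observations produces the infinite family required by the corollary. The main content lies entirely in Theorems~\ref{thm:non-iso} and~\ref{thm:type}; once they are in hand there is no further obstacle, and the proof reduces to citing them alongside Suciu's original property that all $R_k$ have a common knot group. I would additionally remark that this answers the question raised just before the corollary about whether examples exist in which the quandle types also coincide, and that it contrasts with the triples from \cite{TaTa23-pre}, where non-isomorphism of the knot quandles was detected precisely by differing types.
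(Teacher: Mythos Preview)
Your proposal is correct and matches the paper's approach exactly: the paper does not give a formal proof environment for this corollary but simply states that combining Theorems~\ref{thm:non-iso} and~\ref{thm:type} shows Suciu's ribbon $n$-knots $R_k$ furnish the desired family, relying on Suciu's original result that all $R_k$ share the trefoil knot group. Your additional remarks about the contrast with \cite{TaTa23-pre} and the question preceding the corollary also mirror the paper's discussion.
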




\section*{Acknowledgments}
The authors thank Michal Jablonowski 
for bringing his preprint \cite{Jab25-pre} to their attention.
The first-named author has been supported in part by 
KAKENHI 
(No.~23K20799, No.~24K06740), 
Japan Society for the Promotion of Science. 
The second-named author has been supported in part by 
KAKENHI 
(No.~21K03220, No.~25K06998), 
Japan Society for the Promotion of Science.


\bibliographystyle{amsplain}
\bibliography{reference-suciu}
\end{document}